\newcommand{\bburl}[1]{\textcolor{blue}{\url{#1}}}
\newcommand{\be}{\begin{equation}}
\newcommand{\ee}{\end{equation}}
\newcommand{\bea}{\begin{eqnarray}}
\newcommand{\eea}{\end{eqnarray}}
\newtheorem{thm}{Theorem}[section]
\newtheorem{lem}[thm]{Lemma}
\newtheorem{prop}[thm]{Proposition}
\newtheorem{rek}[thm]{Remark}
\numberwithin{equation}{section}
\begin{document}

\title{The Schreier space does not have the uniform $\lambda$-property}

\author{Kevin Beanland}
\author{H\`ung Vi\d{\^e}t Chu}
\email{\textcolor{blue}{\href{mailto:beanlandk@wlu.edu}{beanlandk@wlu.edu}}}
\email{\textcolor{blue}{\href{mailto:chuh19@mail.wlu.edu}{chuh19@mail.wlu.edu}}}
\address{Department of Mathematics, Washington and Lee University, Lexington, VA 24450, USA}

\thanks{H.V. Chu is an undergraduate student at Washington \& Lee University.}
\thanks{2010 \textit{Mathematics Subject Classification}. Primary: 46B99 }
\thanks{\textit{Key words}: extreme points, $\lambda$-property, Schreier space}

\begin{abstract}
The $\lambda$-property and the uniform $\lambda$-property were first introduced by R. Aron and R. Lohman in 1987 as geometric properties of Banach spaces. In 1989, Th. Shura and D. Trautman showed that the Schreier space posseses the $\lambda$-property and asked if it has the uniform $\lambda$-property. In this paper, we show that Schreier space does not have the uniform $\lambda$-property. Furthermore, we show that the dual of the Schreier space does not have the uniform $\lambda$-property.  
\end{abstract}

\maketitle

\section{Introduction}
\subsection{Background}
Schreier space is a Banach space constructed by J. Schreier in 1930 \cite{Schreier1930} as a counterexample to a question of Banach and Saks. The space has the property that the standard unit vector basis (which we denote $(e_i)_{i=1}^\infty$) is weakly null, but there is no subsequence that Cesaro sums to $0$. Recall that the Schreier space is defined as follows: Let $c_{00}$ be the vector space of real sequences $x = (x(1),x(2),x(3),\ldots)$, which are finitely supported. A set $F\subset \mathbb{N}$ is admissible if $\min F\geqslant |F|$. We denote $\mathcal{S}_1$ to be the collection of all admissible subsets of $\mathbb{N}$. The Schreier space is the completion of $c_{00}$ with respect to the following norm
$$||x|| \ =\ \sup_{F\in \mathcal{S}_1} \sum_{i\in F}|x(i)|.$$
\noindent Since the only Banach spaces we refer to this paper are the Schreier space and its dual, we will denote the Schreier space $X$ and its dual $X^*$. Let $Ba(X)$, $S(X)$, and $E(X)$ denote the unit ball, unit sphere, and the set of extreme points of the ball of $X$, respectively. Recall that
$x\in E(X)$ if $x\in S(X)$ and whenever $x = 1/2(y + z)$ for some $y, z\in S(X)$, we have $x = y = z$.

In 1987, R. Aron and R. Lohman introduced geometric properties for Banach spaces, called the $\lambda$-property and the uniform $\lambda$-property \cite{AronLohman}. Since then, much progress have been made in understanding these properties for different spaces \cite{ABC-Schreier,ALS, Bogachev,Bohonos,LohmanAspectsLambda,LohmanDirectSum, PeraltaAntonio, ShuraTrautman}. In 1989, Th. Shura and D. Trautman proved that the Schreier space has the $\lambda$-property and asked if it has the uniform $\lambda$-property \cite{ShuraTrautman}. Recently L. Antunes and the authors of the present paper extended these results for higher order Schreier spaces and their $p$-convexifications \cite{ABC-Schreier}. The first main result of this paper solves the problem first stated in \cite{ShuraTrautman} and restated in \cite{ABC-Schreier}.

\begin{thm}\label{maintheo}
The Schreier space $X$ does not have the uniform $\lambda$-property.
\end{thm}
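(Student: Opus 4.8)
The plan is to exhibit a sequence $x_n\in S(X)$ with $\lambda(x_n)\to 0$; since the uniform $\lambda$-property requires $\inf_{x\in Ba(X)}\lambda(x)>0$, this suffices. I will use the reformulation $\lambda(x)=\sup\{\lambda\in[0,1]:\exists\,e\in E(X),\ \|x-\lambda e\|\le 1-\lambda\}$, obtained by writing $y=(x-\lambda e)/(1-\lambda)$ and imposing $\|y\|\le 1$. As test vectors I take $x_n=\frac1n\sum_{i=n}^{2n-1}e_i$, whose support $F_0=\{n,\dots,2n-1\}$ is a maximal admissible set, so $\|x_n\|=1$ with $F_0$ the unique norming set.

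First I pin down, for a fixed extreme point $e$, how large $\lambda$ can be in $x_n=\lambda e+(1-\lambda)y$. Pairing with $f_{F_0}=\sum_{i\in F_0}e_i^\ast\in Ba(X^\ast)$ gives $1=\langle x_n,f_{F_0}\rangle\le\lambda\langle e,f_{F_0}\rangle+(1-\lambda)$, so $\langle e,f_{F_0}\rangle\ge 1$; since $\langle e,f_{F_0}\rangle\le\|e\|=1$ this forces $\sum_{i\in F_0}e(i)=1$ and $e(i)\ge 0$ on $F_0$. Restricting $\|x_n-\lambda e\|\le 1-\lambda$ to $F_0$ yields $\sum_{i\in F_0}|\tfrac1n-\lambda e(i)|\le 1-\lambda$, while $\sum_{i\in F_0}(\tfrac1n-\lambda e(i))=1-\lambda$; equality in the triangle inequality forces each term $\tfrac1n-\lambda e(i)\ge 0$, i.e.\ $\lambda\le\frac{1}{n\,\max_{i\in F_0}e(i)}$. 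Hence $\lambda(x_n)\le \frac{1}{n\,m_n}$, where $m_n=\inf\{\max_{i\in F_0}e(i)\}$ over extreme points $e$ with $e|_{F_0}\ge 0$ and $\sum_{F_0}e=1$.

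Everything now reduces to the structural claim that extreme points cannot be almost flat on a far maximal admissible set, i.e.\ $n\,m_n\to\infty$. Here the rigidity of $E(X)$ enters. First, every $e\in E(X)$ satisfies $|e(1)|=1$: the index $1$ lies in no admissible set of size $\ge 2$, so the perturbation $e\pm te_1$ stays in $Ba(X)$ unless $|e(1)|=1$. More generally, for $e\ge 0$ each coordinate of $\operatorname{supp}(e)$ must lie in a norming admissible set, and blocking the perturbations at the infinitely many indices beyond $\operatorname{supp}(e)$ requires $e$ to possess a \emph{non-maximal} norming set $G$ (with $|G|<\min G$), since only such a $G$ can be enlarged by an arbitrarily large index and stay admissible. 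I then show a non-maximal norming set is incompatible with near-flatness on $F_0$: appending to $G$ enough coordinates of $F_0$ (each of value $\approx 1/n$) produces a maximal admissible set of mass $>1$, contradicting $\|e\|=1$ unless those coordinates vanish. Quantitatively, internally pinning a flat block at scale $n$ forces an auxiliary coordinate of value close to $1$, and the admissibility condition lets only a bounded amount of such high-value mass coexist (two indices $\ge k$ carrying large values overfill a size-$k$ admissible set). Thus $\operatorname{supp}(e)\cap F_0$ stays small and $\max_{i\in F_0}e(i)\ge c>0$, giving $m_n\ge c$ and $\lambda(x_n)\le\frac{1}{cn}\to 0$.

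The main obstacle is precisely this quantitative flatness lemma: converting the ``competition for high-value coordinates'' heuristic into a clean lower bound on $\max_{i\in F_0}e(i)$ valid for every extreme point normed by $F_0$. I expect to handle it by bookkeeping the norming sets meeting $F_0$—each either equals $F_0$ or reaches below index $n$ and hence carries at most $\min G-1<n$ coordinates of $F_0$—together with the fact that the total admissible mass available below index $n$ is capped by $\|e\|=1$. As a cross-check there is the soft general bound $\lambda(x)\le 1-\tfrac12\operatorname{dist}(x,E(X))$ (immediate from $x-e=(1-\lambda)(y-e)$); note that for the present $x_n$ it only gives $\lambda\le\tfrac12$, since the nearest extreme point sits at distance about $1$ and clashes with $x_n$ only through the isolated first coordinate. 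This is exactly why the sharper norming-set estimate of the second paragraph, rather than a distance argument, is the route I would carry to completion.
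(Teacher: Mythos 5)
Your reduction in the second paragraph is sound: pairing with $f_{F_0}=\sum_{i\in F_0}e_i^*$ does force $\sum_{i\in F_0}e(i)=1$ with $e\geqslant 0$ on $F_0$, and the equality case of the triangle inequality on $F_0$ does give $\lambda\leqslant \frac{1}{n\max_{i\in F_0}e(i)}$. The fatal gap is the structural claim that $\max_{i\in F_0}e(i)\geqslant c$ for a universal constant $c>0$; this is false, and the mechanism you propose for it (appending coordinates of $F_0$ to a non-maximal $1$-set of $e$ to overfill an admissible set) breaks down because those appended coordinates can lie outside $\mbox{supp}\,e$, contributing $x_n$-mass $\tfrac1n$ but $e$-mass $0$. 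Concretely, for any $J$ with $2^J\leqslant n-1$, put $m=2^J$, let $G$ be any $m$-element subset of $F_0=\{n,\ldots,2n-1\}$, and set
\begin{equation*}
e \ =\ e_1+\tfrac12(e_2+e_3)+\tfrac14(e_4+\cdots+e_7)+\cdots+2^{-(J-1)}\big(e_{2^{J-1}}+\cdots+e_{2^J-1}\big)+2^{-J}e_{2^J}+2^{-J}\sum_{i\in G}e_i.
\end{equation*}
(For $J=2$ this is $(1,\tfrac12,\tfrac12,\tfrac14,0,\ldots,0,\tfrac14,\tfrac14,\tfrac14,\tfrac14,0,\ldots)$.) One checks $\|e\|=1$, and $e$ is an extreme point: the sets $\{1\}$, $\{2,3\}$, the dyadic blocks $[2^j,2^{j+1}-1]$, the bridging sets $[2^{J-1}+s,2^J]\cup G''$, the sets $\{2^J\}\cup G'$ with $|G'|=m-1$, and $G\cup\{j\}$ for $j\geqslant m+1$ are all $1$-sets or norm-one witnesses, and the usual equality argument (summing $\sum_F|e(i)+z(i)|\leqslant 1$ and $\sum_F|e(i)-z(i)|\leqslant 1$ over a $1$-set $F$ forces $\sum_F z(i)=0$, then differencing the sets $\{2^J\}\cup G'$ over the $m$ choices of $G'$ makes $z$ constant on $G$) pins every coordinate of a symmetric perturbation to $0$. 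This $e$ satisfies $\sum_{i\in F_0}e(i)=\sum_{i\in G}e(i)=1$, $e\geqslant 0$, and $\max_{i\in F_0}e(i)=2^{-J}$, which with $J=\lfloor\log_2(n-1)\rfloor$ is at most $\tfrac{2}{n-1}$. Hence your $m_n$ tends to $0$ like $1/n$ and the bound $\lambda\leqslant\frac{1}{nm_n}$ degenerates to a constant near $\tfrac12$. Worse, with this $e$ one can verify directly that $y=(x_n-\lambda e)/(1-\lambda)$ stays in $Ba(X)$ for a fixed $\lambda>0$ independent of $n$ (the $F_0$-constraint is exactly $1-\lambda$ and the remaining admissible sets only force $\lambda\lesssim\tfrac12$), so $\lambda(x_n)$ is bounded below: the flat test vectors $x_n=\frac1n\sum_{i=n}^{2n-1}e_i$ do not witness the failure of the uniform $\lambda$-property, and no repair of the flatness lemma can save them.

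This is precisely the obstruction the paper's construction is engineered around. Its test vector is deliberately \emph{not} flat on the far block $C=\{n+2,\ldots,2n+2\}$: the coefficients $\tfrac1n-\tfrac1{n^3}$ together with the smaller terminal coefficient $\tfrac{1}{n^2}$, and the auxiliary $1$-sets $A$ and $\{3,i,j\}$, force any admissible extreme point to be \emph{exactly} constant equal to $\alpha$ on $D=C\setminus\{2n+2\}$ with $e(2n+2)\leqslant\alpha$; the case $e(2n+2)=\alpha$ pins $\alpha=\tfrac1{n+1}$ and yields $\lambda\leqslant\tfrac{n+1}{n^2}$ via the gap between $\sum_{i\in D}x(i)=1-\tfrac1{n^2}$ and $\sum_{i\in D}e(i)=1-\tfrac1{n+1}$ (the paper's Lemma 6.1), while the case $e(2n+2)<\alpha$ is eliminated by exhibiting an explicit symmetric perturbation showing such $e$ is not extreme. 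If you want to salvage your approach, you must build that kind of rigidity into $x_n$ rather than hope for it from $E(X)$.
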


\noindent The main difficulty in proving that $X$ does not have the uniform $\lambda$-property lies in the fact that we do not have a  workable characterization of $E(X)$. Th. Shura and D. Trautman proved that every element of $E(X)$ is finitely supported; however, even in the case of moderately sized supports (say the max of the support is $10$), it is computationally difficult to classify all of the extreme points. On the other hand, in many of the previous results proving or disproving that a space has the uniform $\lambda$-property, such a characterization is used in an essential way. The main illustrative example is in showing that the space $\ell_1$ does not have the uniform $\lambda$-property \cite{AronLohman}. Since we do not have a full characterization of $E(X)$, we have instead to develop a different technique. 


Our second main result is showing that $X^*$ does not have the uniform $\lambda$-property neither. In \cite{ABC-Schreier}, $X^*$ is shown to have the $\lambda$-property and the set $E(X^*)$ is characterized. 

\begin{thm}\label{dualtheo}
The dual of the Schreier space $X^*$ does not have the uniform $\lambda$-property.
\end{thm}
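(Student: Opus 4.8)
The plan is to exhibit a sequence $(x_n) \subset Ba(X^*)$ for which
$$\lambda(x_n) \ =\ \sup\{\lambda \in [0,1] : x_n = \lambda e + (1-\lambda)y \text{ for some } e \in E(X^*),\ y \in Ba(X^*)\} \ \longrightarrow\ 0,$$
so that $\inf_{x \in Ba(X^*)}\lambda(x) = 0$ and the uniform $\lambda$-property fails. The only input I need from the characterization of $E(X^*)$ in \cite{ABC-Schreier} is the weak fact that every $e \in E(X^*)$ is finitely supported on an \emph{admissible} set $G \in \mathcal{S}_1$; I will not use the precise values of its coefficients.

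For the construction, fix the dyadic blocks $B_k = \{2^k, 2^k+1, \dots, 2^{k+1}-1\}$, each of which is admissible since $\min B_k = |B_k| = 2^k$, and set $A_n = \bigcup_{k=1}^n B_k$ together with
$$x_n \ =\ \frac{1}{n}\sum_{i \in A_n} e_i^*.$$
First I would check that $x_n \in S(X^*)$. The bound $\|x_n\|_* \le 1$ follows by splitting any $w$ with $\|w\|_X \le 1$ over the $n$ admissible blocks, since $\sum_{i \in B_k}|w(i)| \le \|w\|_X \le 1$ for each $k$. For the matching lower bound I introduce the multiscale test vector
$$z \ =\ \sum_{k=1}^{n} 2^{-k}\sum_{i \in B_k} e_i \ \in\ X,$$
and verify $\|z\|_X \le 1$: for an admissible $F$ with $\min F = a \in B_{k_0}$, a greedy estimate (loading as much of $F$ as possible onto the heaviest available indices, of weight $2^{-k_0}$) gives $\sum_{i \in F}|z(i)| \le 1$. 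Since $\langle x_n, z\rangle = \frac1n \sum_{k=1}^n 2^{-k}|B_k| = 1$, this forces $\|x_n\|_* = 1$.

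The heart of the argument is a uniform lower bound on $\|x_n - \lambda e\|_*$ over all extreme points. Given $e \in E(X^*)$ with admissible support $G$, I would simply delete $G$ from the test vector, setting $z' = \sum_{i \in A_n \setminus G} z(i)\, e_i$. Then $\|z'\|_X \le \|z\|_X \le 1$ because $|z'| \le |z|$ pointwise, while $\langle e, z'\rangle = 0$ since $z'$ vanishes on $G$. Because $G$ is admissible and $z \ge 0$ we have $\sum_{i \in G} z(i) \le \|z\|_X \le 1$, whereas the total weight of $z$ on $A_n$ equals $n$; hence
$$\langle x_n - \lambda e, z'\rangle \ =\ \langle x_n, z'\rangle \ =\ 1 - \frac1n \sum_{i \in A_n \cap G} z(i) \ \ge\ 1 - \frac1n$$
for every $\lambda \ge 0$. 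Consequently $\|x_n - \lambda e\|_* \ge 1 - \frac1n$ for all $e \in E(X^*)$ and all $\lambda \ge 0$.

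To finish, if $x_n = \lambda e + (1-\lambda)y$ with $e \in E(X^*)$ and $y \in Ba(X^*)$, then $x_n - \lambda e = (1-\lambda)y$ gives $1 - \lambda \ge \|x_n - \lambda e\|_* \ge 1 - \frac1n$, forcing $\lambda \le \frac1n$. Thus $\lambda(x_n) \le \frac1n \to 0$, which proves Theorem \ref{dualtheo}. The main obstacle, and the point requiring the cleverest design, is arranging a single norming vector $z$ whose total mass ($=n$) is large while every admissible set — and therefore the support of every extreme point — absorbs at most a bounded amount ($\le 1$) of it; the dyadic weights $2^{-k}$ on $B_k$ are chosen precisely to balance $\|z\|_X \le 1$ against $\langle x_n, z\rangle = 1$, so that excising any one extreme support costs only an $O(1/n)$ fraction of the norm.
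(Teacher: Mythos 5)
Your proof is correct and follows essentially the same route as the paper's: the same functional $\frac{1}{n}\sum_{i\in A_n}e_i^*$ built from consecutive dyadic admissible blocks, the same norming vector formed by summing the normalized blocks, and the same key step of deleting the support of the extreme point from the test vector and pairing. Your one genuine refinement is the estimate $\sum_{i\in G}z(i)\leq \|z\|\leq 1$ for the admissible support $G$ of the extreme point, which replaces the paper's block-by-block count of $|F_k\setminus F|$, yields the slightly sharper bound $\lambda\leq 1/n$ (versus $3/n$), and uses only that elements of $E(X^*)$ are supported on admissible sets rather than the full characterization of $E(X^*)$.
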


\subsection{Definition and notation}

A space $Y$ is said to have the $\lambda$-property if for all $y$ in $Ba(Y)$, there exists $0<\lambda\leqslant 1$ such that $y$ can be written as $\lambda e+(1-\lambda)z$ for some $e \in E(Y)$, $z \in Ba(Y)$. We write $(e,z,\lambda)\sim y$ to mean that the vector $y$ can be written in terms of $\lambda$, $e$, and $z$.  Given a vector $y$, we may find different sets $(e,z,\lambda)$ such that $(e,z,\lambda)\sim y$. Because $\lambda$ is bounded above by $1$, we can define the so-called $\lambda$-function: given $y\in Ba(Y)$,
$$\lambda(y)=\sup\{\lambda:(e,z,\lambda)\sim y\}.$$ 
If there exists $\lambda_0>0$ such that for all $y\in Ba(Y)$, $\lambda(y)\geqslant \lambda_0$, we say that $Y$ has the uniform $\lambda$-property. 

We go back to our Schreier space $X$. For each $x \in S(X)$, let 
$$\mathcal{F}_x \ =\ \big\{F\in \mathcal{S}_1: \sum_{i\in F}|x(i)| = 1 \mbox{ and } |x(i)| > 0 \mbox{ for all } i\in F\big\}.$$
We call sets in $\mathcal{F}_x$ the $1$-sets of $x$. Let 
$$\mathcal{A}_x \ =\ \big\{F\in \mathcal{S}_1: \sum_{i\in F}|x(i)| = 1 \big\}.$$
Clearly, $\mathcal{F}_x \subset \mathcal{A}_x$ for each $x \in S(X)$. The standard unit vector basis of $c_{00}$ is denoted $(e_i)_{i=1}^\infty$ and it is a $1$-unconditional Schauder basis for $X$.

The paper is structured as follows: Section \ref{prelim} mentions some essential results for our later proofs; Section \ref{theo1} and Section \ref{theo2} present the proofs of Theorem \ref{maintheo} and Theorem \ref{dualtheo}, respectively; Section \ref{prob} gives some open problems for future research. The Appendix discusses the key technique in our proof of Theorem \ref{maintheo} that helps us solve the problem without a full characterization of $E(X)$. As Th. Shura and D. Trautman \cite{ShuraTrautman} claimed without proof that the cardinality of the support of each vector $x\in E(X)$ is even, we provide a stronger result; that is, the cardinality of the so-called non-maximal 1-set is one-half the cardinality of $x$. We present the proof of this necessary condition in the Appendix as well.

\section{Preliminary results} \label{prelim}
We mention several important results that are useful in our proof of Theorem \ref{maintheo}.
\begin{rek}
Let $x \in S(X)$ and suppose $x\sim (e,y,\lambda)$ with $\lambda < 1$. If $F \in \mathcal{F}_x$, then $F\in \mathcal{A}_e\cap \mathcal{A}_y$. Indeed, if $F \in \mathcal{F}_x$, we have
$$1\ =\ \sum_{i \in F}|x(i)|\ =\ \sum_{i \in F} |\lambda e(i) +(1-\lambda) y(i)|\ \leqslant\ \lambda \sum_{i \in F} | e(i)| + (1-\lambda) \sum_{i \in F} |y(i)| \ \leqslant\ 1.$$ \label{one sets}
Therefore, $\sum_{i\in F} |e(i)|= \sum_{i\in F}|y(i)| = 1$. 
\end{rek}
The following lemma is proved in \cite[Lemma 2.5]{BeanlandCombinatorial}. 
\begin{lem} \label{epsilon-gap}Given $x\in S(X)$, there exists $\varepsilon_x > 0$ such that for all $F\in \mathcal{S}_1\backslash \mathcal{A}_x$, \begin{equation*}\sum_{i\in F}|x(i)|\ <\ 1-\varepsilon_x.\end{equation*}
\end{lem}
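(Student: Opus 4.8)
The plan is to prove the equivalent statement that
$P := \sup\{\sum_{i\in F}|x(i)| : F\in\mathcal{S}_1\setminus\mathcal{A}_x\}$
is strictly less than $1$; any $\varepsilon_x\in(0,1-P)$ then satisfies the conclusion. Since every $F\in\mathcal{S}_1\setminus\mathcal{A}_x$ has $\sum_{i\in F}|x(i)|<\|x\|=1$, we have $P\leqslant 1$, and I would argue by contradiction, assuming $P=1$ and fixing admissible sets $F_n\notin\mathcal{A}_x$ with $\sum_{i\in F_n}|x(i)|\to 1$. The first preparatory step is to record that, because $(e_i)$ is a Schauder basis for $X$, the tail norms $\|x|_{[j,\infty)}\|$ (where $x|_{[j,\infty)}$ is the restriction of $x$ to coordinates $\geqslant j$) tend to $0$ as $j\to\infty$; in particular the coordinates $x(i)$ tend to $0$.

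Next I would confine the sets $F_n$ to a fixed finite window. For any admissible $F$ one has $F\subseteq[\min F,\infty)$, so $\sum_{i\in F}|x(i)|\leqslant\|x|_{[\min F,\infty)}\|$. Fixing $B$ with $\|x|_{(B,\infty)}\|<1/2$, it follows that $\min F_n\leqslant B$ for all large $n$, and the admissibility condition $\min F_n\geqslant|F_n|$ then forces $|F_n|\leqslant B$. Passing to a subsequence I may assume $|F_n|=k$ is constant and write $F_n=\{a_1^{(n)}<\cdots<a_k^{(n)}\}$. For each fixed coordinate $j\in\{1,\dots,k\}$ the integer sequence $(a_j^{(n)})_n$ is either bounded or has a subsequence tending to infinity; after finitely many further extractions I arrange that each $(a_j^{(n)})_n$ is either eventually equal to some constant $c_j$ or tends to $\infty$. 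Let $G=\{c_j : (a_j^{(n)})_n\text{ is eventually constant}\}$. Since $G\subseteq F_n$ for all large $n$ and subsets of admissible sets are admissible, $G\in\mathcal{S}_1$; and because the escaping coordinates contribute $|x(a_j^{(n)})|\to 0$ while the stable ones contribute $|x(c_j)|$, I obtain $\sum_{i\in G}|x(i)|=\lim_n\sum_{i\in F_n}|x(i)|=1$.

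This yields the desired contradiction: $G$ is a nonempty admissible set with $\sum_{i\in G}|x(i)|=1$, so $G\in\mathcal{A}_x$, and $G\subseteq F_n$ for large $n$ gives $\sum_{i\in F_n}|x(i)|\geqslant\sum_{i\in G}|x(i)|=1$, contradicting $F_n\notin\mathcal{A}_x$. Hence $P<1$, which proves the lemma. The main obstacle, and the point where admissibility is essential, is ensuring that the near-maximal sets $F_n$ do not disperse to infinity: without the constraint $\min F\geqslant|F|$ the cardinalities $|F_n|$ could blow up and no limiting finite set $G$ would survive. The tail-norm estimate together with $\min F_n\geqslant|F_n|$ is exactly what pins the significant part of each $F_n$ inside a fixed finite window, so that a genuine maximizer $G$ can be extracted.
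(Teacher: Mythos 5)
Your proof is correct. Note that the paper does not actually prove this lemma --- it only cites \cite[Lemma 2.5]{BeanlandCombinatorial} --- so there is no internal argument to compare against; your write-up supplies a complete, self-contained proof. The argument you give is the standard one for such ``$\varepsilon$-gap'' statements: assume a maximizing sequence $F_n\notin\mathcal{A}_x$, use the vanishing of the tail norms $\|x|_{[j,\infty)}\|$ together with admissibility ($\min F\geqslant |F|$) to trap the sets in a window of bounded cardinality and bounded minimum, and then extract by a finite diagonal argument a limiting admissible set $G$ (this is in essence the compactness of $\mathcal{S}_1$ in the pointwise topology). All the steps check out, including the two points a careful reader would pause on: $G$ is automatically nonempty because $\sum_{i\in G}|x(i)|=1>0$, and $G\subseteq F_n$ forces $\sum_{i\in F_n}|x(i)|=1$, contradicting $F_n\notin\mathcal{A}_x$ since every admissible sum is already bounded by $\|x\|=1$. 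You also correctly identify admissibility as the hypothesis that prevents the near-maximal sets from dispersing; the lemma is false for, say, the $\ell_1$-norm over arbitrary finite sets.
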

We call $\varepsilon_x$ the $\varepsilon$-gap of $x$ and is useful in proving the next proposition. Given a vector $x$, Lemma \ref{epsilon-gap} implies that there is no sequence $(F_n)_{n=1}^\infty\subseteq \mathcal{S}_1$ such that $\sum_{i\in F_n}|x(i)|<1$ for each $F_n$ while $\lim_{n\rightarrow \infty}\sum_{i\in F_n}|x(i)| = 1$. 
\begin{prop}\label{coeffin1set}
If $e \in E(X)$, then for each $i\in \mathbb{N}$, there exists a set $F \in \mathcal{A}_e$ with $i \in F$. \label{in support}
\end{prop}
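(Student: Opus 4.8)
The plan is to argue by contradiction via a single-coordinate perturbation, using the $\varepsilon$-gap from Lemma \ref{epsilon-gap} to guarantee enough room. Suppose there is some $i \in \mathbb{N}$ that belongs to no set in $\mathcal{A}_e$. Then every admissible set $F$ with $i \in F$ lies in $\mathcal{S}_1 \setminus \mathcal{A}_e$, so by Lemma \ref{epsilon-gap} we have $\sum_{j \in F}|e(j)| < 1 - \varepsilon_e$ for all such $F$, where $\varepsilon_e > 0$ is the $\varepsilon$-gap of $e$.

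Next I would perturb $e$ only at the coordinate $i$. Fix $0 < t \leqslant \varepsilon_e$ and set $y = e + t e_i$ and $z = e - t e_i$, so that $e = \frac{1}{2}(y+z)$ and $y \neq z$ (since $t > 0$). To contradict $e \in E(X)$ it suffices to show $\|y\|, \|z\| \leqslant 1$: then $1 = \|e\| \leqslant \frac{1}{2}(\|y\| + \|z\|) \leqslant 1$ forces $\|y\| = \|z\| = 1$, exhibiting $e$ as a nontrivial midpoint of two distinct points of $S(X)$.

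To bound the norms I would estimate $\sum_{j \in F}|(e \pm t e_i)(j)|$ over $F \in \mathcal{S}_1$ in two cases. If $i \notin F$ the perturbation changes nothing, so the sum equals $\sum_{j \in F}|e(j)| \leqslant \|e\| = 1$. If $i \in F$, then $\sum_{j \in F}|(e \pm t e_i)(j)| \leqslant \sum_{j \in F}|e(j)| + t < (1 - \varepsilon_e) + t \leqslant 1$, using the paragraph above. Taking the supremum over all $F$ yields $\|e \pm t e_i\| \leqslant 1$, as required. Note that this estimate never uses $e(i) \neq 0$, so it treats coordinates inside and outside the support of $e$ uniformly.

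The crux is the uniform gap supplied by Lemma \ref{epsilon-gap}. Without it one could only conclude that each admissible set through $i$ has sum strictly below $1$; a sequence of such sets with sums tending to $1$ would then leave no fixed $t > 0$ for which the perturbed vectors remain in $Ba(X)$. The $\varepsilon$-gap rules out precisely this behaviour, which is what makes the single-coordinate perturbation admissible and is the only genuinely nonroutine ingredient in the argument.
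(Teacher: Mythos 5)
Your proposal is correct and follows essentially the same argument as the paper: assume some coordinate lies in no set of $\mathcal{A}_e$, invoke the $\varepsilon$-gap of Lemma \ref{epsilon-gap}, perturb $e$ at that single coordinate by $\pm\varepsilon_e$, and verify by the same two-case norm estimate that both perturbations stay in the ball, contradicting extremality. The only (welcome) addition is your explicit remark that the midpoint identity forces both perturbed vectors onto the sphere, which the paper leaves implicit.
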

\begin{proof}
Suppose that there exists some $N\in\mathbb{N}$ such that for all $F\in\mathcal{A}_e$, $N \notin F$. Hence, for all $F\in \mathcal{S}_1$ containing $N$, $F\notin \mathcal{A}_e$ and so, $\sum_{i\in F}|e(i)| < 1-\varepsilon_e$, where $\varepsilon_e$ is the $\varepsilon$-gap of $e$. Form $x$ such that $x(i) = e(i)$ for all $i\neq N$, and $x(N) = e(N) + \varepsilon_e$. Form $y$ such that $y(i) = e(i)$ for all $i\neq N$, and $y(N) = e(N) - \varepsilon_e$. Clearly, $e = \frac{1}{2}(x + y)$. Let $F\in\mathcal{S}_1$. If $N\notin F$, $\sum_{i\in F}|x(i)| \leqslant ||x|| = 1$. If $N\in F$, we have 
\begin{align*}\sum_{i\in F}|x(i)| \ =\ \sum_{i\in F, i\neq N}|x(i)| + |x(N)|
&\ \leqslant \ \sum_{i\in F, i\neq N}|e(i)| + |e(N)| + \varepsilon_e\\ &\ <\ 1-\varepsilon_e + \varepsilon_e \ =\ 1.
\end{align*}
Therefore, $x\in Ba(X)$ and similarly, $y\in Ba(X)$. Because $x\neq y$, we have $e\not\in E(X)$, a contradiction. So, for all $i\in \mathbb{N}$, there exists $F\in \mathcal{A}_e$ with $i\in F$.
\end{proof}

As the basis $(e_i)_{i=1}^\infty$ is a $1$-unconditional basis for $X$, the sign-changing operator is an isometry for $X$, and so if $e \in E(X)$ then $|e|= \sum_{i=1}^\infty |e(i)|e_i\in E(X)$. To show that $X$ does not have the uniform $\lambda$-property, we will find suitable vectors $x$ so that if $x \sim (e,y,\lambda)$ then $\lambda$ satisfies a particular upper bound. The next lemma states that if the coefficients of $x$ are non-negative and $x \sim (e,y,\lambda)$ then $x \sim (|e|,y',\lambda)$ for some $y'\in Ba(X)$. Therefore, when finding an upper bound on $\lambda$, we may assume that the extreme point in the triple has non-negative coefficients. 

\begin{lem}
Let $x\in S(X)$ with $x(i) \geqslant 0$ for each $i\in\mathbb{N}$. If $x\sim (e,y,\lambda)$, then $x\sim (|e|, y', \lambda)$, where $|e| = \sum_{i=1}^\infty |e(i)|e_i$ and for some $y'\in Ba(X)$. \label{positive} 
\end{lem}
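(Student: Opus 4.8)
The plan is to define $y'$ by the only formula that can possibly work and then verify it lands in the ball. If $\lambda=1$ then $x=e\in E(X)$, and since $x(i)\geqslant 0$ for every $i$ we have $x=|x|=|e|$; taking $y'=x$ gives $x\sim(|e|,x,1)$ trivially. So assume $\lambda<1$ and set
$$y'(i)\ =\ \frac{x(i)-\lambda|e(i)|}{1-\lambda}\qquad(i\in\mathbb{N}),$$
which forces $x=\lambda|e|+(1-\lambda)y'$ by construction. Since $|e|\in E(X)$ by the $1$-unconditionality of the basis (noted above), it remains only to prove $y'\in Ba(X)$.

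Because the Schreier norm is monotone in the moduli of the coordinates---it is the supremum over admissible $F$ of $\sum_{i\in F}|x(i)|$---it suffices to establish the pointwise estimate $|y'(i)|\leqslant|y(i)|$ for every $i$, where $y(i)=(x(i)-\lambda e(i))/(1-\lambda)$; this immediately yields $\|y'\|\leqslant\|y\|\leqslant 1$. When $e(i)\geqslant 0$ we have $|e(i)|=e(i)$, so $y'(i)=y(i)$ and there is nothing to prove. The content of the lemma is therefore the case $e(i)<0$, which is where I expect the only real work to lie.

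For that case I would argue as follows. Since $e(i)<0$ and $x(i)=\lambda e(i)+(1-\lambda)y(i)\geqslant 0$, we get $(1-\lambda)y(i)\geqslant-\lambda e(i)=\lambda|e(i)|\geqslant 0$, so $y(i)\geqslant \lambda|e(i)|/(1-\lambda)>0$. Using $|e(i)|=-e(i)$, one checks the two-sided bound
\begin{align*}
y'(i)\ =\ \frac{x(i)+\lambda e(i)}{1-\lambda}\ &\leqslant\ \frac{x(i)-\lambda e(i)}{1-\lambda}\ =\ y(i),\\
-y'(i)\ =\ \frac{-x(i)+\lambda|e(i)|}{1-\lambda}\ &\leqslant\ \frac{\lambda|e(i)|}{1-\lambda}\ \leqslant\ y(i),
\end{align*}
where the first line uses $e(i)\leqslant 0$ and the second uses $x(i)\geqslant 0$ together with the lower bound on $y(i)$ just obtained. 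Hence $-y(i)\leqslant y'(i)\leqslant y(i)$, i.e. $|y'(i)|\leqslant|y(i)|$, completing every case. The crucial hypothesis throughout is the nonnegativity $x(i)\geqslant 0$, which is exactly what lets the comparison with $y$ absorb the sign flips introduced by replacing $e$ with $|e|$.
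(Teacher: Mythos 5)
Your proof is correct and follows essentially the same route as the paper: define $y'(i)=(x(i)-\lambda|e(i)|)/(1-\lambda)$, establish the pointwise bound $|y'(i)|\leqslant|y(i)|$ using $x(i)\geqslant 0$, and conclude $y'\in Ba(X)$ from the monotonicity of the Schreier norm. The only difference is that you spell out the case analysis behind the inequality (and the trivial $\lambda=1$ case), which the paper asserts in one line.
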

\begin{proof}
Note that if $e \in E(X)$, then $|e| \in E(X)$. We have $$|y'(i)| \ =\ \bigg|\frac{x(i) - \lambda |e(i)|}{1-\lambda}\bigg| \ \leqslant \ \bigg|\frac{x(i) - \lambda e(i)}{1-\lambda}\bigg| \ =\ |y(i)|.$$
Hence, for each $i$, $|y'(i)|\leqslant |y(i)|$. Because $y\in Ba(X)$, we know that $y'\in Ba(X)$. 
\end{proof}

\section{$X$ does not have the uniform $\lambda$-property} \label{theo1}

\begin{proof}[Proof of Theorem \ref{maintheo}]
Fix a sufficiently large $n\in \mathbb{N}$. In particular, our $n$ must be large enough so that $1-\frac{2}{n}+\frac{2}{n^3}>\frac{1}{n}-\frac{1}{n^3}>\frac{1}{n^2}$ and also its magnitude must satisfy other conditions made in our arguments below. Consider the following vector
$$x \ =\ \bigg(1,\frac{2}{n}-\frac{2}{n^3},1-\frac{2}{n}+\frac{2}{n^3},\underbrace{0,\ldots,0}_{n-2},\underbrace{\frac{1}{n}-\frac{1}{n^3},\ldots,\frac{1}{n}-\frac{1}{n^3}}_{n},\frac{1}{n^2},0,\ldots\bigg).$$
Note that the last nonzero coefficient is the smallest coeffcient. For notational simplicity, denote $$A \ =\  \{2,3\}, B \ =\ \{4,\ldots,n+1\}, C \ =\ \{n+2,\ldots,2n+2\},$$ 
$$E \ =\ B\backslash \{n+1\}, D \ =\  C\backslash \{2n+2\}.$$ 
A straightforward computation checks that $\|x\| =1$. In particular, all the $1$-sets of $x$ are $A$, $C$, and $\{3,i,j\}$ for any $i\neq j$ in $D$. Because $4\notin F$ for all $F\in \mathcal{A}_x$, by Proposition \ref{coeffin1set}, $x\notin E(X)$. Suppose that $x\sim (e, y, \lambda)$ with $\lambda<1$ (due to $x\notin E(X)$). Using Lemma \ref{positive}, we may assume that $e$ has non-negative coefficients. Trivially, for each $i\in \mathbb{N}$,
\begin{equation}
    y(i) \ =\ \frac{x(i) - \lambda e(i)}{1-\lambda}. \label{the y}
\end{equation}
The proof proceeds by eliminating several possible values for the coefficients of $e$ and then proving that the remaining values yield  $\lambda \leqslant f(n)$ for some function $f$ satisfying $\lim_{n\to \infty} f(n)=0$. This clearly implies that $X$ has does not have the uniform $\lambda$-property. We will repeatedly use Remark \ref{one sets} which states that $\mathcal{F}_x \subset \mathcal{A}_e$. 
The following claims give information about the coefficients of $e$.
\begin{itemize}
    \item[(i)] For $j > 2n+2$, $e(j)=0$.
    \item[(ii)] There is an $\alpha>0 $ so that $e(i)=\alpha$ for $i \in D\cup E$. 
    \item[(iii)] $e(3)>e(2)>\alpha\geqslant e(2n+2)$, where $\alpha$ is defined in item (ii). Also, $e(n+1) = e(2n+2)$. 
\end{itemize}

\noindent Proof of (i): Because $C\in \mathcal{F}_x$, $C \in \mathcal{A}_e$. Since $C \cup \{j\} \in \mathcal{S}_1$ for each $j > 2n+2$ we have that $\sum_{i \in C}|e(i)| + |e(j)| \leqslant \|e\|=1$. Therefore, $e(j)=0$. \newline 

\noindent Proof of (ii): Let $i, j, k$ (pairwise distinct) in $D$. Because $\{3,i,k\},\{3,j,k\}\in\mathcal{F}_x$, $\{3,i,k\},\{3,j,k\} \in \mathcal{A}_e$. It follows that $|e(i)|=|e(j)|$ and since the both are non-negative, $e(i)=e(j)$. Let $\alpha$ be their common value.  So, for all $i\in D$, $e(i)=\alpha$ for some $\alpha\geqslant 0$. Assume that $\alpha = 0$. Because $C\in \mathcal{A}_e$, $e(2n+2) = 1$. Since $A\in \mathcal{A}_e$, we have $e(2)+e(3) = 1$. So, either $e(2) + e(2n+2)>1$ or $e(3)+e(2n+2) > 1$, a contradiction. Therefore, $\alpha > 0$.

Now, we want to show that $e(i) = \alpha$ for each $i \in B\backslash\{n+1\}$. By Proposition \ref{in support}, for $i \in E$, there exists an $F \in \mathcal{A}_e$ with $i \in F$. If we have $e(k)<\alpha$ for some $k\in E$ then $F' = (F \setminus \{k\}) \cup \{j\} \in \mathcal{S}_1$ for any $j \in D\backslash F$. Note that $D\backslash F$ is non-empty because $F$ containing $k$ can have at most $n-1$ coefficients in $D$. This contradicts $||e|| = 1$ because $\sum_{i\in F'}|e(i)|>1$.  If $e(k)> \alpha$ for some $k \in B$, we have the contradiction: $e(3)+e(k)+e(i) > e(3)+e(j)+e(i)=1$ for any $i,j \in D$. Therefore, $e(i) = \alpha$ for each $i \in B\backslash\{n+1\}$, as desired. \newline

\noindent Proof of (iii): The proof is straightforward. Because $e(2) + e(3) = 1$ and $e(3) + 2\alpha = 1$, $e(2) = 2\alpha > \alpha$. Next, suppose that $e(2n+2)> \alpha$. Then $e(3) + e(n+2) + e(2n+2) > e(3) + 2\alpha  = 1$, a contradiction. Hence, $e(2n+2)\leqslant \alpha$. We have shown that $e(2) > \alpha \geqslant e(2n+2)$. It remains to show that $e(3) > e(2)$. If $e(3)\leqslant e(2) = 2\alpha$, then $1 = e(2) + e(3)\leqslant 2\alpha + 2\alpha = 4\alpha$. So, $\alpha \geqslant 1/4$, which implies that $\sum_{i\in C}|e(i)|>1$ for sufficiently large $n$, which is a contradiction. Therefore, we have $e(3)>e(2)>\alpha\geqslant e(2n+2)$.

Finally, if $e(n+1) > e(2n+2)$, then $$\sum_{i\in \{n+1\}\cup D}|e(i)| = e(n+1) + n\alpha \ >\  e(2n+2) + n\alpha = 1,$$
a contradiction. Suppose that $e(n+1)<e(2n+2)\leqslant \alpha$. By Proposition \ref{coeffin1set}, there exists $F\in\mathcal{A}_e$ with $n+1\in F$. We can replace $n+1$ in $F$ by any number in $C$ to have a norm greater than $1$, a contradiction. Hence, $e(n+1) = e(2n+2)$. 

The next claim provides an upper bound for $\lambda$. 
\begin{itemize}
    \item[(iv)] If $e(2n+2)=\alpha$, then $$\lambda \ \leqslant\  \frac{n+1}{n^2}.$$ 
\end{itemize}

\noindent Proof of (iv): If $e(2n+2) = \alpha$, we have $\alpha = \frac{1}{n+1}$ because $C\in \mathcal{A}_e$. For $i\in D$, Equation \ref{the y} gives
$$y(i) \ =\  \frac{\frac{1}{n}-\frac{1}{n^3}-\lambda\frac{1}{n+1}}{1-\lambda}.$$
Note that because $\lambda<1$, we have $\frac{1}{n}-\frac{1}{n^3} -\lambda\frac{1}{n+1}>\frac{1}{n}-\frac{1}{n^3}-\frac{1}{n+1} = \frac{n^2-n-1}{n^3(n+1)}>0$ for $n$ sufficiently large. Hence, for each $i\in D$, $y(i)>0$. Because $y\in Ba(X)$, 

$$\sum_{i\in D} |y(i)| \ =\ \sum_{i\in D}  \frac{\frac{1}{n}-\frac{1}{n^3}-\lambda\frac{1}{n+1}}{1-\lambda}\ =\ n\bigg( \frac{\frac{1}{n}-\frac{1}{n^3}-\lambda\frac{1}{n+1}}{1-\lambda}\bigg)\ \leqslant \ 1.$$
Solving for $\lambda$, we have the upper bound for $\lambda$ $$\lambda \ \leqslant\  \frac{n+1}{n^2},$$ as desired. 

Due to (iii) and (iv), we narrow down to the case $e(3)>e(2)>\alpha> e(2n+2) = e(n+1)$. We will show that in this case $e \not\in E(X)$. Pick $\varepsilon> 0$ such that 
\begin{align}e(3) - 4\varepsilon >e(2) > \alpha  > e(2n+2) + (n+1)\varepsilon \mbox{ and } e(2n+2)-(n+1)\varepsilon > 0,\label{choiceepsilon}\end{align}
and let 
$$z \ =\ (0,2\varepsilon, -2\varepsilon, \underbrace{\varepsilon, \ldots, \varepsilon}_{n-3},-n\varepsilon, \underbrace{\varepsilon,\ldots,\varepsilon}_{n}, -n\varepsilon, 0,\ldots). $$
It is clear that $e = \frac{1}{2}((e+z)+(e-z))$ and $e+z\neq e-z$. If we can show that $e+z$ and $e-z$ are in $Ba(X)$, then $e$ is not an extreme point. 
First, we consider $e+z$. Let $F\in \mathcal{S}_1$ be chosen. We have $e+z$ is 
\begin{align*}(1,e(2)+2\varepsilon,e(3)-2\varepsilon,\underbrace{(\alpha+\varepsilon)}_{n-3 \mbox{ times}},e(n+1)-n\varepsilon, \underbrace{(\alpha+\varepsilon)}_{n\mbox{ times}},e(2n+2)-n\varepsilon, 0,\ldots).\end{align*}
Due to \ref{choiceepsilon}, all coefficients of $e+z$ are non-negative. We proceed by case analysis:
\begin{itemize}
    \item If $\min F = 2$, condition \ref{choiceepsilon} guarantees that $(e+z)(3)$ is the maximum among the remaining coefficients. Hence, $\sum_{i\in F}|(e+z)(i)|\leqslant (e+z)(2)+(e+z)(3) = (e(2)+2\varepsilon)+(e(3)-2\varepsilon) = e(2) + e(3) = 1$. 
    \item If $\min F = 3$, condition \ref{choiceepsilon} guarantees that the two maximum coefficients after the third coefficient is $\alpha+\varepsilon$. Hence, $\sum_{i\in F}|(e+z)(i)|\leqslant (e+z)(3) + 2(\alpha+\varepsilon) = e(3) + 2\alpha = 1$.
    \item If $\min F> 3$, then we can take at most $n$ coefficients of value $\alpha + \varepsilon$ and either the coefficient $e(n+1) - n\varepsilon$ or $e(2n+2) - n\varepsilon$. Again, $n(\alpha+\varepsilon) + e(2n+2) - n\varepsilon = n\alpha + e(2n+2) = 1$. 
\end{itemize}
Therefore, $||e+z|| \leqslant 1$, as desired. Finally, we prove that $||e-z||$ is also at most $1$. We have
\begin{align*}(1,e(2)-2\varepsilon,e(3)+2\varepsilon,\underbrace{(\alpha-\varepsilon)}_{n-3 \mbox{ times}},e(n+1)+n\varepsilon, \underbrace{(\alpha-\varepsilon)}_{n\mbox{ times}},e(2n+2)+n\varepsilon, 0,\ldots).\end{align*}
Due to \ref{choiceepsilon}, all coefficients of $e+z$ are non-negative. We proceed by case analysis:
\begin{itemize}
    \item If $\min F = 2$, condition \ref{choiceepsilon} guarantees that $(e+z)(3)$ is the maximum among the remaining coefficients. Hence, $\sum_{i\in F}|(e+z)(i)|\leqslant (e+z)(2)+(e+z)(3) = (e(2)-2\varepsilon)+(e(3)+2\varepsilon) = e(2) + e(3) = 1$. 
    \item If $\min F = 3$, condition \ref{choiceepsilon} guarantees that the two maximum coefficients after the third coefficient is $\alpha-\varepsilon$. Hence, $\sum_{i\in F}|(e+z)(i)|\leqslant (e+z)(3) + 2(\alpha-\varepsilon) = e(3) + 2\alpha = 1$.
    \item If $\min F> 3$, then we can take at most $n$ coefficients of value $\alpha - \varepsilon$ and the coefficient $e(2n+2) + n\varepsilon$. Again, $n(\alpha-\varepsilon) + e(2n+2) + n\varepsilon = n\alpha + e(2n+2) = 1$. 
\end{itemize}
Therefore, $||e+z|| \leqslant 1$, as desired. We have proved that $e$ is not an extreme point if $e(3)>e(2)>\alpha> e(2n+2) = e(n+1)$.
Putting all cases together, we have
$$\lambda \ \leqslant\  f(n): = \frac{n+1}{n^2}.$$
Since $f(n) \to 0$ as $n \to \infty$, $X$ does not have the uniform $\lambda$-property.
\end{proof}
\section{$X^*$ does not have the uniform $\lambda$-property} \label{theo2}

\begin{proof}[Proof of Theorem \ref{dualtheo}]
In \cite{ABC-Schreier}, it is shown that 
$$E(X^*)\ =\ \big\{\sum_{i \in F} \varepsilon_i e_i : |F|=\min F,\,\varepsilon_i \in \{\pm 1\}, i \in F\big\}.$$
Fix $n \in \mathbb{N}$. For each $k\in \mathbb{N}$, let $F_k=\{2^{k-1},\ldots,2^k-1\}$, $x^*_k = \sum_{i \in F_k} e^*_i$, $x_k=\frac{1}{2^{k-1}}\sum_{i \in F_k} e_i$, and $x=\sum_{k=1}^n x_k$. Using \cite[Proposition 0.7]{CS-book}, we have $\|\sum_{k=1}^n x_k\|=1$. Furthermore $x^*=\frac{1}{n}\sum_{k=1}^n x^*_k\in S(X^*)$ as $x^*(\sum_{k=1}^n x_k)=1$ and $\|\frac{1}{n}\sum_{k=1}^n x^*_k\|\leqslant \frac{1}{n} \sum_{k=1}^n \|x^*_k\| =1$. Find a triple $(e^*,y^*,\lambda) \sim x^*$. Let $F \subset \mathbb{N}$ with $|F|=\min F$ so that $e^* =\sum_{i \in F} \varepsilon_i e^*_i$. Suppose we can find $m \in \{1,\ldots, n \}$ such that $\min F \in F_m$. Then $|F|<2^m$. Note that 
$$|F_m\setminus F\ |\geqslant\ 0 , |F_{m+1}\setminus F|\ >\ 0, \mbox{ and for $k>m+1$}, |F_{k}\setminus F|\ >\ 2^{k-1}-2^m.$$ 
Let $x'=x|_{\mathbb{N}\setminus F}$ (that is, the vector $x$ restricted to the coordinates in $\mathbb{N}\setminus F$.). We know that $\|x'\|\leqslant 1$. By a direct computation (see (\ref{the y}), $y^*(e_i)= 1/(n(1-\lambda))$ for each $i \in \mathbb{N}\setminus F$. We will estimate $y^*(x')$ from below to get an upper bound on $\lambda$. 
\begin{equation}
    \begin{split}
        y^*(x')  &\ =\ y^*(\sum_{k=1}^{m-1} x_k) + y^*(\sum_{k=m}^n x_k|_{\mathbb{N}\setminus F})\\
        & \ \geqslant\ \frac{m-1}{n(1-\lambda)} + \frac{1}{n(1-\lambda)} \sum_{k=m}^n \frac{|F_k \setminus F|}{2^{k-1}} \\
        & \ >\ \frac{m-1}{n(1-\lambda)} + \frac{1}{n(1-\lambda)} \sum_{k=m+2}^n \bigg(1- \frac{1}{2^{k-m-1}}\bigg) \\
        & \ >\ \frac{m-1}{n(1-\lambda)} + \frac{1}{n(1-\lambda)} \bigg(\sum_{k=m+2}^n 1- \sum_{k=m+2}^\infty\frac{1}{2^{k-m-1}}\bigg)\\
        & \ \geqslant\ \frac{m-1}{n(1-\lambda)} + \frac{n-m-1}{n(1-\lambda)} - \frac{1}{n(1-\lambda)} = \frac{n-3}{n(1-\lambda)}.
    \end{split}
\end{equation}
Since $y^*(x') \leqslant 1$, we have $\lambda < 3/n$. 

If $\min F > 2^n-1$, then no such $m$ exists, $x'=x$, and $1\geqslant y^*(x')\geqslant 1/(1-\lambda)$. Therefore, $\lambda=0$. This is the desired result.
\end{proof}

\section{Future research}\label{prob}
We list some possible questions for future research.
\begin{itemize}
    \item[(I)] Given two sets $E, F$, we write $E<F$ if $\max E<\min F$ and write $n<E$ if $n<\min E$. The Schreier families \cite{AlA-Dissertationes} are defined as follows: letting $\mathcal{S}_0=\{F:|F|\leqslant 1\}$ and supposing that $\mathcal{S}_n$ $(n\in\mathbb{N}\cup\{0\})$ has been defined, we define

\begin{equation*}\mathcal{S}_{n+1}\ =\ \big\{\bigcup^{n}_{i=1} E_i:n\leqslant E_1<E_2<...<E_n \mbox{ are in } \mathcal{S}_n\big\}.\end{equation*} 
For each $\mathcal{S}_n$, we define the Banach space $X_{\mathcal{S}_n}$ as the completion of $c_{00}$ with respect to the following norm \begin{equation*}\|x\|_{\mathcal{S}_n}\ =\ \sup_{F\in\mathcal{S}_n}\sum_{i\in F}|x(i)|.\end{equation*}
Do the higher-order Schreier spaces (and their duals) have the uniform $\lambda$-property? We conjecture negatively. 
\item[(II)] Can $X$ be renormed to have the uniform $\lambda$-property? 
\item [(III)] L. Antunes and the authors of the present paper showed that the $p$-convexification of the higher-order Schreier spaces have the uniform $\lambda$-property with $\lambda_0 = \frac{1}{8}$ \cite{ABC-Schreier}. We made no attempt in sharpening the bound of $\frac{1}{8}$. Can we improve the bound? 
\item[(IV)] Is there a characterization of the elements of $E(X)$?  A necessary condition, due to Shura and Trautman \cite{ShuraTrautman}, states that for each $x\in E(X)$, $x$ must have a non-maximal 1-set and the cardinality of $\mbox{supp } x$ is even. 

\end{itemize}

\section{Appendix}
\subsection{Key technique.}
In the proofs of our two main theorems, we use a specific sequence of vectors $(x_n)_{n=1}^\infty$ with $\lim_{n\rightarrow\infty}\lambda(x_n) = 0$, thus disproving the uniform $\lambda$-property. The proof of Theorem \ref{dualtheo} is simpler than the proof of Theorem \ref{maintheo} because we know the exact structure of $E(X^*)$, but do not know $E(X)$. For the proof of Theorem \ref{maintheo}, we construct a sequence $(x_n)$ that may seem unnatural. Here we discuss a lemma that motivates our construction of such a sequence. 

\begin{lem}\label{keytech}
Let $(x_n)$ with $x_n\sim (e_n,y_n,\lambda_n)$ and coefficients of $e_n$ being non-negative. Suppose that there exists $F_n\in \mathcal{S}_1$ such that
\begin{itemize}
    \item $\sum_{i\in F_n}|x_n(i)| = 1-g(n)$,
    \item $\sum_{i\in F_n}|e_n(i)| = 1 - h(n)$,
    \item For all $n\in\mathbb{N}$, $g(n), h(n) > 0$ and $\lim_{n\rightarrow \infty} \frac{g(n)}{h(n)} = 0$.
\end{itemize}  
If $y_n(i)\geqslant 0$ for each $i$, then $\lim_{n\rightarrow\infty}\lambda_n = 0$. 
\end{lem}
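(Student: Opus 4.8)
The plan is to turn the two summation conditions on $F_n$ into a single linear inequality for $\lambda_n$, using the positivity hypotheses to remove all absolute values. First I would start from the coordinatewise identity $x_n(i)=\lambda_n e_n(i)+(1-\lambda_n)y_n(i)$ coming from Equation (\ref{the y}). The crucial preliminary observation is that every coefficient of $x_n$ is then non-negative: since $\lambda_n\in(0,1]$, $e_n(i)\geqslant 0$ by assumption, and $y_n(i)\geqslant 0$ by hypothesis, each $x_n(i)$ is a non-negative combination of non-negative numbers. Consequently $|x_n(i)|=x_n(i)$, $|e_n(i)|=e_n(i)$, and $|y_n(i)|=y_n(i)$ throughout, so the two given sums read $\sum_{i\in F_n}x_n(i)=1-g(n)$ and $\sum_{i\in F_n}e_n(i)=1-h(n)$.

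Next I would sum the identity over $i\in F_n$, obtaining $1-g(n)=\lambda_n(1-h(n))+(1-\lambda_n)\sum_{i\in F_n}y_n(i)$. For the last term I need only an upper bound: because $F_n\in\mathcal{S}_1$ and $y_n\in Ba(X)$, the definition of the Schreier norm gives $\sum_{i\in F_n}y_n(i)=\sum_{i\in F_n}|y_n(i)|\leqslant \|y_n\|\leqslant 1$. Substituting this bound yields $1-g(n)\leqslant \lambda_n(1-h(n))+(1-\lambda_n)$, whose right-hand side simplifies to $1-\lambda_n h(n)$. Rearranging gives $\lambda_n h(n)\leqslant g(n)$, and since $h(n)>0$ we conclude $\lambda_n\leqslant g(n)/h(n)$. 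The third hypothesis $\lim_{n\rightarrow\infty}g(n)/h(n)=0$ then forces $\lambda_n\rightarrow 0$, as claimed.

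I do not expect a serious computational obstacle here; the proof is short once the right observation is made. The real content is recognizing that the two positivity assumptions together make $x_n$ coordinatewise non-negative, which is exactly what upgrades the $1$-set conditions from the triangle-inequality bounds used in Remark \ref{one sets} to honest equalities, and simultaneously lets me discard the $y_n$-term in the favorable direction via $\|y_n\|\leqslant 1$. Conceptually, the lemma explains the design of the vectors $x_n$ in Theorem \ref{maintheo}: one should engineer a $1$-set $F_n$ of $x_n$ along which the best available extreme point $e_n$ must fall short of mass $1$ by an amount $h(n)$ of strictly larger order than the deficit $g(n)$ of $x_n$ itself, thereby squeezing $\lambda_n$ to $0$.
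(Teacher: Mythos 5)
Your proposal is correct and is essentially the paper's own proof: both sum the identity $x_n(i)=\lambda_n e_n(i)+(1-\lambda_n)y_n(i)$ over $F_n$, use the positivity hypotheses to drop absolute values, bound $\sum_{i\in F_n}|y_n(i)|\leqslant\|y_n\|\leqslant 1$, and rearrange to get $\lambda_n\leqslant g(n)/h(n)$. Your explicit remark that the two positivity assumptions force $x_n$ to be coordinatewise non-negative (so that $\sum_{i\in F_n}|x_n(i)|=\sum_{i\in F_n}x_n(i)$) is a small point the paper leaves implicit, but the argument is otherwise identical.
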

\begin{proof}
By Lemma \ref{positive}, we can assume that $e(i)\geqslant 0$ for each $i\in \mathbb{N}$. By Equation \ref{the y} and $y_n(i)\geqslant 0$ for each $i$, we have
\begin{align*} \sum_{i\in F_n}|y_n(i)| &\ =\ \sum_{i\in F_n} \frac{x_n(i)-\lambda_n e_n(i)}{1-\lambda_n} \ =\ \frac{1}{1-\lambda_n}\big(\sum_{i\in F_n}x_n(i)-\lambda_n\sum_{i\in F_n}e_n(i)\big)\\
&\ =\ \frac{1}{1-\lambda_n} (1-g(n)-\lambda_n(1-h(n))) \ \leqslant \ 1.
\end{align*}
Solving for $\lambda_n$, we have $\lambda_n \leqslant \frac{g(n)}{h(n)}$. Because $\lim_{n\rightarrow \infty}\frac{g(n)}{h(n)} = 0$, $\lim_{n\rightarrow\infty }\lambda_n = 0$, as desired. 
\end{proof}
\begin{rek}
The above lemma is a key observation in disproving the uniform $\lambda$-property without a full knowledge of the set of extreme points. In our proof of Theorem \ref{maintheo}, we use $F_n = \{n+2,\ldots, 2n+1\}$, $g(n) = \frac{1}{n^2}$, and $h(n) = \frac{1}{n+1}$. Hence, $\lambda_n\leqslant \frac{n+1}{n^2}$ (see the proof of claim (iv)).
\end{rek}

\subsection{Understanding $E(X)$.}
Let $F\in \mathcal{S}_1$. If $F\cup\{j\}\in\mathcal{S}_1$ for some $j\notin F$, then $F$ is said to be non-maximal. We denote by $\mathcal{S}_1^{MAX}$ the maximal elements of $\mathcal{S}_1$. If $F \in \mathcal{F}_x \setminus \mathcal{S}_1^{MAX}$, then $F$ is a non-maximal 1-set of $x$. 
It is shown in \cite{ShuraTrautman} that for each $x\in E(X)$, $x$ is finitely supported and has a non-maximal 1-set. Denote the support of $x$ by $\mbox{supp}\,x$. 

\begin{lem}\label{ext}
Let $x\in E(X)$. The following hold:
\begin{itemize}
\item [(i)] The vector $x$ has a unique non-maximal 1-set $F$, and $\mbox{supp}\,x\cap [\min\,F,\infty) = F$.
\item [(ii)] Let $F$ be the non-maximal 1-set of $x$. Then $\mbox{supp}\,x\cap [1, |F|] = [1, |F|]$. 
\end{itemize}
\end{lem}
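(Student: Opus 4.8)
The plan is to prove both structural facts about the non-maximal $1$-set $F$ of an extreme point $x$ by exploiting the same mechanism used throughout the paper: if the support of $x$ "violates" the shape dictated by $F$, then $x$ can be written as a nontrivial midpoint $\frac12((x+z)+(x-z))$ with $x+z, x-z \in Ba(X)$, contradicting $x \in E(X)$. Throughout I would use the $\varepsilon$-gap $\varepsilon_x$ from Lemma \ref{epsilon-gap}, which guarantees a uniform margin between $1$ and the norm-contribution of any admissible set that is not in $\mathcal{A}_x$; this is what lets the perturbation $z$ be supported on the "offending" coordinates without pushing the norm above $1$.

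For part (i), first I would fix the non-maximal $1$-set $F$ (which exists by the Shura--Trautman result) and argue the containment $\mbox{supp}\,x \cap [\min F, \infty) = F$. The inclusion $F \subseteq \mbox{supp}\,x$ is immediate since a $1$-set has all coordinates nonzero. For the reverse, suppose some $j > \min F$ lies in $\mbox{supp}\,x \setminus F$. Since $F$ is non-maximal, $F \cup \{j'\} \in \mathcal{S}_1$ for some $j'$, and more importantly I would want to form an admissible set containing both (part of) $F$ and $j$ and show that a small sign-respecting perturbation on these coordinates keeps every admissible sum at or below $1$: coordinates in $\mathcal{A}_x$ are balanced as in Remark \ref{one sets}, while coordinates in sets outside $\mathcal{A}_x$ have the $\varepsilon_x$ slack to absorb the perturbation. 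The uniqueness of $F$ would then follow because two distinct non-maximal $1$-sets would each be forced, by the containment just proved, to agree with $\mbox{supp}\,x$ past their minima, and a comparison of their minimal elements yields a coordinate outside one of them that contradicts the containment.

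For part (ii), I would show $\mbox{supp}\,x \cap [1, |F|] = [1, |F|]$, i.e.\ the first $|F|$ coordinates are all nonzero. The idea is that admissibility of $F$ forces $\min F \geqslant |F|$; if some coordinate $k \in [1, |F|]$ were zero, then $F$ could be "slid left" by replacing its largest element with $k$, producing another admissible set still summing to $1$ (or within $\varepsilon_x$ of it), which I would leverage either to contradict uniqueness from part (i) or to build the splitting perturbation. Here the constraint $\min F \geqslant |F|$ is what makes $[1,|F|]$ the relevant initial block, and the extreme-point splitting argument supplies the contradiction when a gap appears.

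The main obstacle I anticipate is controlling the perturbation $z$ simultaneously across all admissible sets in part (i): unlike the concrete vector in Theorem \ref{maintheo}, here $x$ is an arbitrary extreme point, so I cannot read off its $1$-sets explicitly. The delicate point is choosing the support and signs of $z$ so that admissible sets meeting both the perturbed $F$-coordinates and the extra coordinate $j$ stay within norm $1$ — this requires carefully separating admissible sets that are tight (in $\mathcal{A}_x$, handled by exact cancellation) from those that are slack (handled by $\varepsilon_x$), and ensuring the magnitude of $z$ is small enough to respect the slack bound uniformly. Getting this bookkeeping right, rather than any single computation, is where the real work lies.
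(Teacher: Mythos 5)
Your proposal has genuine gaps in both parts, and in both cases the issue is that you reach for the extreme-point splitting machinery where a much more elementary argument is both available and necessary. For the containment $\mbox{supp}\,x\cap[\min F,\infty)=F$ in (i), no perturbation is needed at all: non-maximality of $F$ forces $\min F\geqslant |F|+1$, so for any $j>\min F$ with $j\notin F$ the set $F\cup\{j\}$ is itself admissible, and if $x(j)\neq 0$ then $\sum_{i\in F\cup\{j\}}|x(i)|=1+|x(j)|>1$, contradicting $\|x\|=1$ directly (this uses only $x\in S(X)$, not extremality). Your plan instead calls for a perturbation $z$ whose ``bookkeeping'' you explicitly defer as ``where the real work lies'' --- that work is never done, and it is not needed. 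The uniqueness argument you sketch (compare minima of two non-maximal $1$-sets, use the containment to nest one inside the other, and derive a sum exceeding $1$) is essentially the paper's argument and is fine.

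Part (ii) is where the proposal actually fails. Your ``slide left'' move --- replacing $\max F$ by the missing coordinate $k\in[1,|F|]$ --- does not in general produce an admissible set: the new set has minimum $k\leqslant |F|$ but still has cardinality $|F|$, so admissibility ($\min\geqslant$ cardinality) fails unless $k=|F|$ exactly; and even then the new set sums to $1-|x(\max F)|+0<1$, so it is not a $1$-set and contradicts neither uniqueness nor the norm bound. The missing ingredient is Proposition \ref{coeffin1set}: because $x$ is extreme, \emph{every} index $k$ --- including one with $x(k)=0$ --- lies in some $F_1\in\mathcal{A}_x$. This is the only place extremality enters, and it is what your sketch omits. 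From there the paper concludes by a swap in the other direction: if some $j\in F\setminus F_1$ exists, then $(F_1\setminus\{k\})\cup\{j\}$ is admissible (one replaces a small index by a larger one, so the minimum does not decrease) and sums to $1+|x(j)|>1$; if instead $F\subseteq F_1$, then $k\in F_1\setminus F$ gives $|F_1|>|F|$, so $\min F_1\geqslant |F_1|>|F|\geqslant k$, contradicting $k\in F_1$. Without Proposition \ref{coeffin1set} there is no set through $k$ to work with, and without the correct swap direction there is no contradiction.
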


\begin{proof}
We prove item (i). Let $x\in E(X)$. Let $F$ be a non-maximal 1-set of $x$. We claim that $F = \mbox{supp}$ $x\cap [\min F,\infty)$. If not, there exists $j\in \mbox{supp } x \cap [\min F, \infty)$ that is not in $F$. Because $F$ is non-maximal, $F' = F\cup \{j\}\in \mathcal{S}_1$ (note $j> \min F$). We have
$$\sum_{i\in F'} |x(i)|\ =\ \sum_{i\in F}|x(i)| + |x(j)| \ =\ 1+|x(j)| \ >\ 1,$$
a contradiction. 

Suppose $F_1$ and $F_2$ are non-maximal 1-sets of $x$. By above, $F_1\neq F_2$ if and only if $\min F_1 \neq \min F_2$. Assume that $\min F_1 < \min F_2$. If so, we reach the following contradiction
$$\sum_{i\in F_1} |x(i)| \ > \ \sum_{i\in F_2} |x(i)| \ =\ 1.$$
Therefore, $F_1 = F_2$. This completes our proof. Therefore, we can refer to `the non-maximal 1-set' of any vector $x$ instead of 'a non-maximal 1-set'. 

Next, we prove item (ii). Suppose that $x(k) = 0$ for some $k\in [1,|F|]$. By definition, $\min F\geqslant |F|$. By Proposition \ref{coeffin1set}, there exists $F_1\in \mathcal{A}_x$ with $k\in F_1$. If there exists $j\in F\backslash F_1$, then since $F_2 = (F_1\backslash \{k\})\cup \{j\}\in \mathcal{S}_1$, we can sum over these coordinates to get a contradiction. If $F\backslash F_1 = \emptyset$, then $F\subset F_1$, which implies that $\min F_1 \geqslant |F_1| > |F|$. However, $\min F_1 \leqslant |F|$ because $k\in F_1$. We have a contradiction. Therefore, $\mbox{supp }x\cap [1, |F|] = [1, |F|]$.

Finally, it suffices to prove that for any $k\in \mbox{supp}\,x$ and $k\geqslant |F|+1$, $k\in F$. This is obvious since $k\geqslant |F|+1$ guarantees that $F\cup \{k\}\in \mathcal{S}_1$. If $k\notin F$, then $\sum_{i\in F}|x(i)| + |x(k)| > 1$, a contradiction. Hence, $k\in F$. This completes our proof.\end{proof}

The above lemma easily yields the following remark which is a slight strengthening of an unproved statement in \cite{ShuraTrautman}.

\begin{rek}
It follows from item (ii) of Lemma \ref{ext} that the cardinality of the non-maximal 1-set is one-half the support of an extreme point. As a corollary, the support of an extreme point in $E(X)$ is even. 
\end{rek}
\bibliographystyle{abbrv}
\bibliography{main}

\end{document}